\newtheorem{theorem}{\sc Theorem}[section]
\newtheorem*{theorem*}{\sc Theorem A}
\newtheorem{lemma}[theorem]{\sc Lemma}
\newtheorem{proposition}[theorem]{\sc Proposition}
\newcommand{\al}{\alpha }
\newcommand{\PSL}{\operatorname{PSL}}
\newcommand{\Sz}{\operatorname{Sz}}
\begin{document}
\title[Criteria for solubility and nilpotency]{Criteria for solubility and nilpotency of finite groups with automorphisms}
\author{Cristina Acciarri}

\address{Cristina Acciarri: Dipartimento di Scienze Fisiche, Informatiche e Matematiche, Universit\`a degli Studi di Modena e Reggio Emilia, Via Campi 213/b, I-41125 Modena, Italy}
\email{cristina.acciarri@unimore.it}

\author{Robert M. Guralnick}

\address{Robert M. Guralnick:  Department of Mathematics, University of Southern California, Los Angeles, CA 90089-2532, USA}
\email{guralnic@usc.edu}

\author{Pavel Shumyatsky }
\address{ Pavel Shumyatsky: Department of Mathematics, University of Brasilia,
Brasilia-DF, 70910-900 Brazil}
\email{pavel@unb.br}
\thanks{The authors thank the reviewer for pointing out an error in an earlier version of the paper. The second author was  partially supported by the NSF grant DMS-1901595 and a Simons Foundation Fellowship 609771. The third author was supported by CNPq and FAPDF, Brazil.}
\keywords{Finite groups, automorphisms}
\subjclass[2010]{20D45}

\begin{abstract} 
Let $G$ be a finite group admitting a coprime automorphism $\al$. Let $J_G(\al)$ denote the set of all commutators $[x,\al]$, where $x$ belongs to an $\al$-invariant Sylow subgroup of $G$. We show that $[G,\al]$ is soluble or nilpotent if and only if any subgroup generated by a pair of elements of coprime orders from the set $J_G(\al)$ is soluble or nilpotent, respectively.
\end{abstract}

\maketitle

\section{Introduction}
In \cite{BW} Baumslag and Wiegold established  the following sufficient condition for the nilpotency of a finite group $G$.
\begin{theorem}
Let $G$ be a finite group in which $|ab|=|a||b|$, whenever the elements $a,b$ have coprime orders. Then $G$ is nilpotent.
\end{theorem}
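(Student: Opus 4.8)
The plan is to argue by contradiction and to analyse a minimal counterexample. Suppose the theorem fails and let $G$ be a group of least order that satisfies the hypothesis but is not nilpotent. Since the hypothesis on orders of products is obviously inherited by subgroups, minimality forces every proper subgroup of $G$ to be nilpotent, so $G$ is a minimal non-nilpotent group. I would invoke the classical description of such groups (Schmidt): $|G|=p^{m}q^{n}$ for distinct primes $p,q$, and $G=PQ$ where $P\triangleleft G$ is a (normal) Sylow $p$-subgroup, $Q$ is a Sylow $q$-subgroup, and $P\cap Q=1$. As $G$ is not nilpotent, $Q$ does not centralise $P$, so there is $y\in Q$ with $[P,y]\neq 1$; write $|y|=q^{k}$. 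A first remark is that $y^{q}$ centralises $P$: indeed $P\langle y^{q}\rangle$ is a proper subgroup of $G$ (of order at most $p^{m}q^{n-1}$), hence nilpotent by minimality, which forces $[P,y^{q}]=1$.

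Next I would record the order of $xy$ for $x\in P$. With the convention $x^{g}:=g^{-1}xg$, an easy induction gives $(xy)^{t}=x\cdot x^{y^{-1}}\cdot x^{y^{-2}}\cdots x^{y^{-(t-1)}}\cdot y^{t}$. Taking $t=q^{k}=|y|$ and using $y^{q}\in C_{G}(P)$ — so that $x^{y^{-i}}$ depends only on $i$ modulo $q$ — the product of the first $q^{k}$ conjugates groups into $q^{k-1}$ identical blocks and collapses to $w(x)^{q^{k-1}}$, where $w(x):=x\cdot x^{y^{-1}}\cdots x^{y^{-(q-1)}}\in P$. Hence $(xy)^{q^{k}}=w(x)^{q^{k-1}}$; since the image of $xy$ in $G/P\cong Q$ has order $q^{k}$ while $|w(x)|$ is a power of $p$ (prime to $q^{k-1}$), this gives
\[
|xy|=q^{k}\cdot|w(x)|.
\]
Because $|x|$ and $|y|$ are coprime, the hypothesis would force $|w(x)|=|x|$ for every $x\in P$, so it suffices to produce one element $x\neq 1$ of $P$ with $w(x)=1$.

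The decisive step is to evaluate $w$ on a commutator. For $a\in P$ put $A_{j}:=y^{j}ay^{-j}$ and take $x:=[y,a]=(a^{y})^{-1}a\in P$. A short calculation gives $x^{y^{-i}}=A_{i-1}^{-1}A_{i}$, so $w(x)=\prod_{i=0}^{q-1}A_{i-1}^{-1}A_{i}=A_{-1}^{-1}A_{q-1}$ by telescoping; and $A_{q-1}=y^{q-1}ay^{-(q-1)}=y^{-1}\bigl(y^{q}ay^{-q}\bigr)y=y^{-1}ay=A_{-1}$, using $y^{q}\in C_{G}(P)$. Thus $w([y,a])=1$ for every $a\in P$. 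Choosing $a$ with $[y,a]\neq 1$ (possible since $[P,y]\neq 1$) and setting $x=[y,a]\neq 1$, we get $|xy|=q^{k}<p\cdot q^{k}\le|x|\cdot|y|$, contradicting the hypothesis since $|x|$ and $|y|$ are coprime. This completes the proof.

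I expect the only genuine obstacle to be locating the right element: the naive candidates — $x$ in the centre of $P$, or a reduction modulo $\Phi(P)$ — only dispose of the case of abelian $P$ and really do fail for non-abelian $P$ (for instance in $G=\mathrm{SL}_{2}(3)$ with $P=Q_{8}$ one is forced to take $x$ of order $4$). The structure theory of minimal non-nilpotent groups used above is entirely standard, and once the formula $|xy|=q^{k}|w(x)|$ is in place the heart of the matter is the clean telescoping identity $w([y,a])=1$: checking it is routine, but spotting it is the real content.
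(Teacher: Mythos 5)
Your proof is correct. Note that the paper itself does not prove this statement: it is quoted from Baumslag and Wiegold \cite{BW}, so there is no internal argument to compare against. Your reduction is the classical one (a minimal counterexample is a minimal non-nilpotent group, hence a Schmidt group $G=PQ$ with $P\trianglelefteq G$ a Sylow $p$-subgroup), and the rest is a clean, self-contained computation: the identity $(xy)^{t}=x\,x^{y^{-1}}\cdots x^{y^{-(t-1)}}y^{t}$, the observation that $[P,y^{q}]=1$ (correctly justified via nilpotency of the proper subgroup $P\langle y^{q}\rangle$) makes the conjugates $q$-periodic, whence $(xy)^{q^{k}}=w(x)^{q^{k-1}}$ and $|xy|=q^{k}|w(x)|$; and finally the telescoping evaluation $w([y,a])=A_{-1}^{-1}A_{q-1}=1$, which together with a choice of $a$ with $[y,a]\neq 1$ contradicts $|xy|=|x||y|$. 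All the small verifications check out: $q^{k}$ divides $|xy|$ because $xyP=yP$ has order $q^{k}$ in $G/P\cong Q$, $w(x)$ is a $p$-element so raising it to the power $q^{k-1}$ does not change its order, and the hypothesis does pass to subgroups, so the Schmidt reduction is legitimate. In spirit this is close to the original argument of \cite{BW} (which also reduces to minimal non-nilpotent groups), with the telescoping identity on commutators doing the work of locating a nontrivial $x\in P$ with $(xy)^{q^{k}}=1$; as a freestanding proof of the theorem it is complete.
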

Here the symbol $|x|$ stands for the order of an element $x$ in a group $G$. Obviously the condition above is also necessary for the nilpotency of $G$.  We mention that there are several  recent results related to the theorem of Baumslag and Wiegold (see for example \cite{BS,BMS,GM,MS,daSilvaS}).

An automorphism $\alpha$ of a finite group $G$ is said to be coprime if $(|G|,|\alpha|)=1$. Following \cite{AGS} denote by  $I_G(\alpha)$ the set of commutators $g^{-1}g^{\alpha}$, where $g\in G$. Let $[G,\alpha]$ be the subgroup generated by $I_G(\alpha)$. In \cite{AGS} the authors studied the impact of $I_G(\alpha)$ on the structure of $[G,\alpha]$.  

Here we establish the following variation of the result of Baumslag and Wiegold.

\begin{theorem} \label{nilpotency2} Let $G$ be a finite group  admitting a  coprime automorphism $\al$. Then $[G,\al]$ is nilpotent if, and only if, $|xy|=|x||y|$ whenever $x$ and $y$ are elements of coprime prime power orders from $ I_G(\al)$.
\end{theorem}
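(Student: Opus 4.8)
The plan is as follows. The ``only if'' direction is immediate: if $[G,\al]$ is nilpotent then, since $I_G(\al)\subseteq[G,\al]$, any two elements $x,y\in I_G(\al)$ of coprime orders lie in a nilpotent group, so they commute and satisfy $\langle x\rangle\cap\langle y\rangle=1$, whence $|xy|=|x||y|$.

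For the converse I would argue by induction on $|G|$, and the first step is to replace $G$ by $[G,\al]$. Indeed $[G,\al]$ is $\al$-invariant with $[[G,\al],\al]=[G,\al]$ by coprimeness, and a direct computation using $G=[G,\al]C_G(\al)$ shows $I_{[G,\al]}(\al)=I_G(\al)$; thus the hypothesis is unchanged and it suffices to prove that $G=[G,\al]$ is nilpotent. I would use two standard features of coprime action: for each prime $p$ there is an $\al$-invariant Sylow $p$-subgroup $P$, and $[G,\al]$ is the normal closure in $G$ of the subgroups $[P,\al]$ taken over all such $P$. The elementary but crucial point is that $I_P(\al)=\{[x,\al]:x\in P\}$ is a set of $p$-elements contained in $I_G(\al)$; hence, for distinct primes $p,q$ and $\al$-invariant Sylow subgroups $P$ (a Sylow $p$-subgroup) and $Q$ (a Sylow $q$-subgroup), the hypothesis applies to every pair $u\in I_P(\al)$, $v\in I_Q(\al)$ and yields $|uv|=|u||v|$.

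Now let $G=[G,\al]$ be a counterexample of least order. The hypothesis is inherited by $[H,\al]$ for every $\al$-invariant subgroup $H\le G$ (since $I_{[H,\al]}(\al)\subseteq I_G(\al)$) and, with due attention to prime-power parts, by the proper $\al$-invariant quotients of $G$; so by minimality every proper $\al$-invariant section of $G$ of the form $[H,\al]$ is nilpotent. If $P,Q$ are as above and $\langle P,Q\rangle\neq G$, then $[\langle P,Q\rangle,\al]$ is nilpotent and so $[P,\al]$ and $[Q,\al]$, being a $p$- and a $q$-subgroup of it, centralise one another. Consequently, if $\langle P,Q\rangle$ were always proper, then $G=[G,\al]$ would be generated by a family of pairwise commuting $p$-subgroups and hence nilpotent, a contradiction; so there must exist distinct primes $p,q$ with $G=\langle P,Q\rangle$. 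The remaining work is to rule this out. I would first prove that $G$ is soluble; granting that, I would then analyse a minimal $\al$-invariant normal subgroup $V$ (an elementary abelian $s$-group) and the way $[P,\al]$ and $[Q,\al]$ act on it, and produce from $I_G(\al)$ a $p$-element $u$ and a $q$-element $v$ with $|uv|\neq|u||v|$ --- for instance by passing to a proper $\al$-invariant section on which the full hypothesis of the theorem of Baumslag and Wiegold \cite{BW} holds, so that that theorem applies and forces nilpotency.

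The principal obstacle is the reduction to the soluble case. A non-soluble minimal counterexample would, on passing to the socle of $G/R$ with $R$ the soluble radical and using $[G,\al]=G$, produce a finite simple group $S$ carrying a nontrivial coprime automorphism; by the classification of finite simple groups such $S$ are very restricted --- essentially groups of Lie type, e.g.\ $\PSL_2(q)$ or $\Sz(q)$, admitting a field automorphism of order prime to $|S|$ --- and for each family one must exhibit explicitly two elements of $I_S(\al)$ of coprime prime-power orders that violate $|xy|=|x||y|$, using the subgroup structure of $S$ and the fixed-point subgroup $C_S(\al)$. This case analysis, or alternatively the invocation of a solubility criterion for $[G,\al]$ in the spirit of \cite{AGS}, is the technical core of the proof; once solubility is secured, the surviving soluble analysis is a comparatively routine Baumslag--Wiegold-style propagation.
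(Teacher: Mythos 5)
The easy direction and the reduction to $G=[G,\al]$ are fine, but your induction has a genuine gap at exactly the point the paper itself flags: you assert that the hypothesis on $I_G(\al)$ is inherited, ``with due attention to prime-power parts,'' by proper $\al$-invariant quotients. This is not known (the authors state explicitly that they could not decide whether the condition of Theorem \ref{nilpotency2} passes to quotients), and no adjustment of prime-power parts obviously repairs it: an element of prime power order of $I_{G/N}(\al)$ is the image of some $[g,\al]\in I_G(\al)$, but that lift need not have prime power order, and powers of elements of $I_G(\al)$ need not lie in $I_G(\al)$, so the order condition in $G$ gives no information about such a pair in $G/N$. The paper's whole design is to dodge this obstruction: it replaces $I_G(\al)$ by the subset $J_G(\al)$ of commutators $[x,\al]$ with $x$ in an $\al$-invariant Sylow subgroup, whose elements automatically have prime power order and which maps onto $J_{G/N}(\al)$ in any quotient (Lemma \ref{20}(iii)); it proves the stronger Theorems \ref{soluble} and \ref{nilpotency-2gen} for that set, and Theorem \ref{nilpotency2} then follows at once because $J_G(\al)\subseteq I_G(\al)$. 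Your induction, as set up, cannot get past a quotient without this (or some other) fix. (A smaller point: in the ``pairwise commuting'' step you only obtain commutation of $[P,\al]$ with $[Q,\al]$ for distinct primes, while several $\al$-invariant Sylow $p$-subgroups for the same $p$ may occur, so ``generated by pairwise commuting $p$-subgroups, hence nilpotent'' needs an extra argument.)

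Beyond this, the two substantive steps are deferred rather than carried out. For solubility you propose either a CFSG case analysis (not done) or the invocation of ``a solubility criterion in the spirit of \cite{AGS}''; but the criterion of \cite{AGS} requires \emph{every} pair of elements of $I_G(\al)$ to generate a soluble subgroup, which does not follow from your hypothesis (it only constrains pairs of coprime prime power orders), so it cannot be invoked. The paper must prove a new criterion (Theorem \ref{soluble} and Proposition \ref{pi with solubility}), and its proof---reduction to a simple component with a coprime field automorphism, Levi subgroups in rank at least two, and primitive prime divisors together with the subgroup structure of $\PSL_2(q)$ and $\Sz(q)$ in rank one---is the technical core. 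Likewise the soluble endgame is not ``routine propagation'': your suggestion of passing to a section where the full Baumslag--Wiegold hypothesis holds is unsubstantiated, since \cite{BW} needs the order condition for all coprime pairs in the group, not merely pairs from $I_G(\al)$, and passing to sections runs into the same quotient problem. The paper instead argues directly: with $G=MQ$ it shows $[M,x]\cap[M,\al]=1$ for $x\in J_Q(\al)$, deduces $C_M(x\al^{-1})\le C_M(\al)$, concludes that $C_M(\al)$ is normal and then trivial, and forces $M\le Z(G)$. So while your overall shape (reduce to $[G,\al]$, minimal counterexample, solubility via simple groups, then a soluble analysis) matches the paper's strategy, the proposal leaves open both the quotient-inheritance obstruction and the two essential arguments.
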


At the start of this project, we did not know whether the hypothesis on the orders of elements in Theorem \ref{nilpotency2} is inherited  by quotient groups. In order to overcome this issue we work instead with a somewhat different condition that behaves well with respect to forming quotients. Let $J_G(\al)$ denote the set of all commutators $[x,\al]$, where  $x$ belongs to an $\al$-invariant Sylow subgroup of $G$. Observe that $J_G(\al)$ is a subset of $I_G(\al)$ and the elements of $J_G(\al)$ have prime power order. Moreover  note that $J_G(\al)$ is a generating set for $[G,\al]$. Indeed this easily follows from \cite[Lemma 2.4]{Shumy}. It turns out that properties of $G$ are pretty much determined by those of subgroups generated by elements of coprime orders from $J_G(\al)$.

It is well known that if any pair of elements of a finite group generates a soluble (respectively nilpotent) subgroup, then the whole group is soluble (respectively nilpotent).  One of the theorems established in \cite{AGS} provides a variation of this for groups with automorphisms.
\bigskip

{\it Let $G$ be a finite group admitting a coprime automorphism $\al$. If any pair of elements from $I_G(\al)$ generates a soluble subgroup, then $[G,\al]$ is soluble. If any pair of elements from $I_G(\al)$ generates a nilpotent subgroup, then $[G,\al]$ is nilpotent.
}
\bigskip

Here this will be extended as follows.
\bigskip

{\it
 Let $G$  be a finite group admitting  a coprime automorphism $\al$.  Then $[G,\al]$ is soluble if and only if any subgroup generated by a pair of  elements of coprime orders from $J_G(\al)$ is soluble.
 }
\bigskip

In fact, we will establish a stronger result. Assume that a finite group $G$ admits an automorphism group $A$. If $\al \in A$, let $J_{G,A}(\al)$ be the set of
commutators $[x,\al]$ for $x$ in an $A$-invariant Sylow subgroup of $G$.
 
\begin{theorem}\label{soluble}
Let $G$  be a finite group admitting a coprime group of automorphisms $A$.  If $\al \in A$, then $[G,\al]$ is soluble if and only if any subgroup generated by a pair of  elements of coprime orders from $J_{G,A}(\al)$ is soluble.
\end{theorem}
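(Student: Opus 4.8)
The forward implication is immediate: each $[x,\al]=x^{-1}x^{\al}$ lies in $[G,\al]$, so $J_{G,A}(\al)\subseteq[G,\al]$ and every subgroup generated by elements of $J_{G,A}(\al)$ is soluble once $[G,\al]$ is. For the converse I would argue by induction on $|G|$, taking $G$ to be a counterexample of least order. The key preliminary point is that the hypothesis is inherited by $A$-invariant quotients: if $N$ is an $A$-invariant normal subgroup of $G$, then coprime action theory shows that every $A$-invariant Sylow subgroup of $G/N$ is the image of an $A$-invariant Sylow subgroup of $G$ (pass to the full preimage of the given Sylow subgroup, which is $A$-invariant, pick an $A$-invariant Sylow subgroup inside it, and check it is Sylow in $G$), so the image of $J_{G,A}(\al)$ in $G/N$ equals $J_{G/N,A}(\al)$ and, conversely, every element of $J_{G/N,A}(\al)$ lifts to an element of $J_{G,A}(\al)$ of the same prime-power order. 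Combined with the identity $[G/N,\al]=[G,\al]N/N$, this shows that $G/N$ again satisfies the hypotheses; applying it to $N=R(G)$, the soluble radical (characteristic, hence $A$-invariant), we may assume $R(G)=1$, as otherwise $G/R(G)$ would be a smaller counterexample.

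With $R(G)=1$ one has $F^{*}(G)=E(G)=S_{1}\times\cdots\times S_{k}$, a direct product of non-abelian simple groups, and $C_{G}(F^{*}(G))=1$, so $G$ acts faithfully by conjugation on $F^{*}(G)$. Since $[G,\al]$ is normal in $G$ (by coprime action) and non-soluble, it cannot centralise $F^{*}(G)$, so $N_{0}:=[G,\al]\cap F^{*}(G)$ is a nontrivial $\al$-invariant product of some of the $S_{i}$, hence non-soluble. By further quotient reductions of the above kind, together with replacing $G$ by the $A$-invariant normal subgroup generated by all $A$-conjugates of $[G,\al]$, I would aim to arrange that $A$ is transitive on $\{S_{1},\dots,S_{k}\}$ (so all $S_{i}$ are isomorphic to a fixed simple group $S$) and that $[G,\al]$ contains a full $\langle\al\rangle$-orbit $T=S_{i_{1}}\times\cdots\times S_{i_{m}}$ of components with $[T,\al]\neq1$. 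Then $[T,\al]$ is a non-soluble $\al$-invariant section lying inside $[G,\al]$.

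The core of the proof is to derive a contradiction by producing, inside $J_{G,A}(\al)$, two elements of coprime prime-power orders that generate a non-soluble subgroup. Intersecting $A$-invariant Sylow $p$-subgroups of $G$ with $T$ gives $A$-invariant Sylow $p$-subgroups of $T$, and taking commutators with $\al$ produces, for each prime $p$, a set of $p$-elements of $J_{G,A}(\al)$ whose span is $[P_{p},\al]$, with $P_{p}$ an $A$-invariant Sylow $p$-subgroup of $T$; since these subgroups jointly generate $[T,\al]$ by \cite[Lemma 2.4]{Shumy}, and $[T,\al]$ is non-soluble, at least two primes $p\neq q$ contribute nontrivially. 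One must then choose $x$ among the $p$-commutators and $y$ among the $q$-commutators so that $\langle x,y\rangle$ is non-soluble. This is where the classification of finite simple groups is needed: one invokes generation statements for a non-abelian simple group $S$ -- and for a direct power $S^{m}$ acted on by a cyclic coprime permutation twisted by automorphisms -- to the effect that it contains a non-soluble subgroup generated by two elements of coprime prime-power orders, combined with sufficient control of the fixed-point subgroup $C_{S}(\al)$ and of the $A$-invariant Sylow subgroups to ensure that such a pair can actually be realised by commutators $[u,\al]$ with $u$ in an $A$-invariant Sylow subgroup.

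I expect the main obstacle to be precisely this final step. In contrast with \cite{AGS}, where all pairs from $I_{G}(\al)$ were available, here the generating pair must at the same time consist of Sylow-commutators with $\al$, have coprime orders, and generate a non-soluble group, which leaves very little freedom; establishing its existence demands a careful CFSG-based analysis of coprime automorphisms of simple groups and of their direct powers, simultaneously keeping track of the centraliser of $\al$, the $A$-invariant Sylow structure, and the generation of the simple group by elements of prescribed orders. A secondary, more technical, difficulty is the bookkeeping in the reduction to the almost-simple core: because $[G,\al]$ need not itself be $A$-invariant, the interplay between the $A$-action and the $\langle\al\rangle$-action on the set of components has to be handled with care throughout the successive quotient reductions.
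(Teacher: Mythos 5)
Your forward implication and the reduction are essentially the paper's: the hypothesis passes to $A$-invariant quotients (this is Lemma~\ref{20}(iii), resting on the fact that $A$-invariant Sylow subgroups of $G/N$ are images of $A$-invariant Sylow subgroups of $G$), so a minimal counterexample has trivial soluble radical, $F^*(G)$ is a product of nonabelian simple groups on which $A$ acts faithfully, and one reduces to $G=T=S_1\times\cdots\times S_t$, the $A$-orbit of a component $S$ not centralised by $\al$. Up to this point your argument is sound (one quibble: a lift of an element of $J_{G/N,A}(\al)$ to $J_{G,A}(\al)$ need not have the same order, but equality of orders is never needed for the solubility statement).

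The genuine gap is exactly the step you flag yourself: you never produce the pair of coprime prime-power-order elements of $J_{G,A}(\al)$ generating an insoluble subgroup, and this is the entire content of the theorem beyond the routine reductions. The paper does it by a concrete case division on $A_1=N_A(S)$. If $A_1=1$, then $\al$ permutes the components nontrivially, and one quotes the solubility criterion of Guralnick--Tiep \cite{GT} (or \cite{DGHP}) inside the single simple group $S$: there are elements $x_1,y_1$ of coprime prime power orders lying in Sylow subgroups $P_1,Q_1$ of $S$ with $\langle x_1,y_1\rangle$ insoluble; the products of the $A$-conjugates of $P_1$ and $Q_1$ are $A$-invariant Sylow subgroups of $T$, and $[x_1,\al],[y_1,\al]$ already generate an insoluble group because $\al$ moves $S$ off itself. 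If $A_1\neq 1$, then $S=L(q)$ is of Lie type and $A_1$ induces field automorphisms; for rank at least $2$ one passes to an $A_1$-invariant Levi subgroup (insoluble since $q\geq 8$) and finishes by induction, and for twisted rank $1$ one reduces to $S=\PSL_2(q)$ or $S=\Sz(q)$ and argues directly: choose, via Zsigmondy's theorem, a primitive prime divisor $r$ (of $q+1$, respectively of $q^2+1$, together with one of $q-1$ for $\Sz(q)$) so that the corresponding $A_1$-invariant Sylow $r$-subgroup $R$ meets $C_S(\al)$ trivially, giving $[R,\al]=R$, and then use the known subgroup structure of $\PSL_2(q)$ and $\Sz(q)$ to see that a nontrivial element of $R$ together with a suitable unipotent (or order-$u$) commutator generates $S$. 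None of this is replaceable by a generic appeal to ``generation statements'': the specific control of $C_S(\al)$ for field automorphisms and the choice of primitive prime divisors is what makes the elements realisable as commutators $[u,\al]$ with $u$ in an $A$-invariant Sylow subgroup. As it stands, your proposal is a correct skeleton with the load-bearing argument missing.
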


Note that this fails if the coprimeness assumption is omitted. For example if $\al$ is a transposition in the symmetric group $G=S_n$, any pair of elements from $I_G(\al)$ generates a soluble subgroup while $[G,\al]$ is insoluble for $n\geq5$. 

Theorem \ref{soluble} is used to establish the following related necessary and sufficient conditions for the nilpotency of $[G,\al]$. 

\begin{theorem}\label{nilpotency-2gen}
Let $G$  be a finite group admitting a coprime group of automorphisms $A$, and let $\al\in A$. Then the following statements are equivalent.
\begin{itemize}
\item[(i)] The subgroup $[G,\al]$ is nilpotent;
\item[(ii)] Any subgroup generated by a pair of  elements of coprime orders from $J_{G,A}(\al)$ is nilpotent;
\item[(iii)]  Any subgroup generated by a pair of elements of coprime orders from $J_{G,A}(\al)$ is abelian;
\item[(iv)]  If $x$ and $y$ are elements of coprime orders from $J_{G,A}(\al)$, then $|xy|=|x||y|$;
\item[(v)] If $x$ and $y$ are elements of coprime orders from $J_{G,A}(\al)$, then $\pi(xy)=\pi(x)\cup\pi(y)$.
\end{itemize}
\end{theorem}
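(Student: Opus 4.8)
The plan is to prove the cycle (i) $\Rightarrow$ (iii) $\Rightarrow$ (ii), and (iii) $\Rightarrow$ (iv) $\Rightarrow$ (v), and then close it with (ii) $\Rightarrow$ (i) and (v) $\Rightarrow$ (i). The implications in the first group are elementary. Every element of $J_{G,A}(\al)$ is a commutator $[x,\al]\in I_G(\al)\subseteq[G,\al]$ of prime power order; hence if $[G,\al]$ is nilpotent, two such elements of coprime orders --- necessarily powers of distinct primes $p$ and $q$ --- lie in different Sylow subgroups of $[G,\al]$ and therefore commute, which gives (iii). Granted (iii), the group $\langle x,y\rangle=\langle x\rangle\times\langle y\rangle$ is abelian, so (ii) holds and, the orders being coprime, $|xy|=|x|\,|y|$, i.e.\ (iv); and (iv) trivially forces $\pi(xy)=\pi(x)\cup\pi(y)$. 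So everything reduces to proving (ii) $\Rightarrow$ (i) and (v) $\Rightarrow$ (i).

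For both I would argue by induction on $|G|$. The key observation --- and this is exactly why one works with $J$ rather than $I$ --- is that each of the conditions (ii)--(v) is inherited by quotients $G/M$ by $A$-invariant normal subgroups $M$: by coprime action every $A$-invariant Sylow subgroup of $G/M$ is the image of an $A$-invariant Sylow subgroup of $G$, the commutators $[x,\al]$ map onto the commutators $[\bar x,\al]$, and a coprime pair of prime power order elements of $J_{G/M,A}(\al)$ lifts to a coprime pair in $J_{G,A}(\al)$. Next I would reduce to the case that $N:=[G,\al]$ is soluble. Under hypothesis (ii) this is immediate, since a nilpotent two-generated subgroup is soluble and Theorem \ref{soluble} applies. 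Under (v) this is the genuinely hard point: one must rule out a nonabelian simple section, which comes down to the classification-based statement that no nonabelian finite simple group $S=\langle a,b\rangle$ with $|a|$ and $|b|$ coprime prime powers satisfies $\pi(ab)=\pi(a)\cup\pi(b)$. I expect this to be extracted from (a variant of) the simple-group input already used to prove Theorem \ref{soluble}.

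So assume $N$ soluble but, for contradiction, not nilpotent, and take $(G,A)$ minimal. Since nilpotency lifts through the Frattini subgroup and through central extensions, and $N$ cannot possess two distinct minimal $A$-invariant normal subgroups (otherwise it embeds in a product of nilpotent groups), one reaches the configuration: $\Phi(G)=1$; $P:=F(G)=O_p(G)$ is elementary abelian and self-centralizing; $O_{p'}(G)=1$; $G=N=[G,\al]$, so $P$ is the unique Sylow $p$-subgroup and $G=P\rtimes H$ with $H$ nilpotent acting faithfully on $P$. Applying the inductive hypothesis to the $A$-invariant subgroups $P\,O_\ell(H)$ for distinct primes $\ell\mid|H|$ one further forces $H$ to be a $q$-group for a single prime $q$. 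The $p$-elements of $J_{G,A}(\al)$ are precisely the elements of $W:=[P,\al]$; one checks $W\neq1$ (else $\al$ centralizes $P$ and $[G,\al]\le H<G$), and that some $q$-element $x\in J_{G,A}(\al)$ does not centralize $W$ --- for if every such $x$ did, then $W\le Z(G)$, and then $G/M$ nilpotent with $M\le Z(G)$ would make $G$ nilpotent. Choosing $y\in W$ not centralized by $x$, the subgroup $\langle x,y\rangle=\langle y^{\langle x\rangle}\rangle\rtimes\langle x\rangle$ is non-nilpotent, contradicting (ii); and with a more careful choice --- $y$ taken inside $[P,x]$, so that the norm operator $1+x+\dots+x^{|x|-1}$ kills $y$ --- one gets $(xy)^{|x|}=1$, whence $\pi(xy)=\{q\}\subsetneq\{p,q\}$, contradicting (v).

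The main obstacle is the solubility step for hypothesis (v): unlike for (ii), Theorem \ref{soluble} is not directly available, and one needs precise information on the generation of (almost) simple groups by pairs of elements of coprime prime power orders and on the primes dividing their products. Secondary technical points are that $[G,\al]$ need not be $A$-invariant, so the reductions in the soluble analysis must be carried out with some care about the ambient automorphism group, and that the final step for (v) requires a little extra argument to guarantee that a suitable $x$ and a nonzero $p$-element of $J_{G,A}(\al)$ lying in $[P,x]$ can be found, so that the product $xy$ loses its $p$-part.
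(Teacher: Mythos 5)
Your overall route is the same as the paper's: the elementary implications among (i)--(v); solubility of $[G,\al]$ under (v) via a $\pi$-version of Theorem \ref{soluble} (this is exactly the paper's Proposition \ref{pi with solubility}, recorded as a by-product of that proof, so your ``hard point'' is already available); then a minimal counterexample, which the paper normalizes as $G=[G,\al]^A=MQ$ with $M$ a minimal $A$-invariant normal elementary abelian $p$-subgroup and $Q$ an $A$-invariant Sylow $q$-subgroup, and an attempt to contradict (v) by producing a $q$-element $x\in J_{G,A}(\al)$ and a nontrivial $y\in[M,x]$ lying in the $p$-part of $J_{G,A}(\al)$ (which here is the set $[M,\al]$), so that $xy\in x[M,x]=x^M$ is a $q$-element and $\pi(xy)\ne\{p,q\}$.

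The gap is precisely the step you dismiss as ``a little extra argument'': you need $[M,x]\cap[M,\al]\ne 1$ for some $q$-element $x\in J_{G,A}(\al)$, and this cannot be guaranteed; handling the opposite case is the heart of the paper's proof, not a technicality. The paper argues: if $[M,x]\cap[M,\al]=1$ for every $x\in J_Q(\al)$, then $C_M(x\al^{-1})\le C_M(\al)$ for all such $x$; specializing to $x=[g,\al]$ with $g\in Q$, so that $x\al^{-1}=\al^{-g}$, gives $C_M(\al)^g=C_M(\al)$ for every $g\in Q$, hence $C_M(\al)$ is normal in $G$; by Lemma \ref{20}(iv) it is centralized by $[Q,\al]$, and minimality forces $C_M(\al)=1$, so $M=[M,\al]$; but then the trivial-intersection assumption yields $[M,x]=1$ for all $x\in J_Q(\al)$, whence $M\le Z(G)$ and $G$ is nilpotent. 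So in this case \emph{no} violating pair exists at all, and the contradiction is with the counterexample assumption rather than with (v); your plan of always exhibiting a pair $x,y$ with $y\in[P,x]$ cannot be completed as stated, and the centralizer/normality argument that replaces it is absent from your sketch. Two minor remarks: your separate proof of (ii)$\Rightarrow$(i) is redundant, since the cycle (ii)$\Rightarrow$(iii)$\Rightarrow$(iv)$\Rightarrow$(v)$\Rightarrow$(i) already closes; and the passage of the hypotheses to quotients should be stated only for what is actually needed (condition (v), via Lemma \ref{20}(iii)), since for (iv) the order of an element can drop in a quotient and the asserted inheritance is not as immediate as for (ii) and (iii).
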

Here, $\pi(g)$ denotes the set of prime divisors of the order of $g\in G$. 

Note that Theorem \ref{nilpotency2} easily follows from Theorem \ref{nilpotency-2gen}. 

\section{A condition for solubility of $[G,\al]$}
All groups considered in this paper are finite. We start with a collection of well-known facts about coprime automorphisms of finite groups (see for example \cite{gore}).

\begin{lemma}\label{20} Let a group $G$ admit a coprime group of automorphisms $A$. The following conditions hold:
\begin{itemize} 
\item[(i)] $G=[G,A]C_G(A)$;
\item[(ii)] If $N$ is any $A$-invariant normal subgroup of $G$, we have $C_{G/N}(A)=C_G(A)N/N$; 
\item[(iii)] If $\al\in A$ and $N$ is an $A$-invariant normal subgroup of $G$, we have $J_{G/N,A}(\al)=\{gN \mid g\in J_{G,A}(\al)\}$;
\item[(iv)] If N is any $A$-invariant normal subgroup of $G$ such that $N=C_N(A)$, then $[G,A]$ centralizes $N$;
\item[(v)] Any $A$-invariant $p$-subgroup of $G$ is contained in an $A$-invariant Sylow $p$-subgroup.
\end{itemize}
\end{lemma}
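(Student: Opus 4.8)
The statements (i)--(v) are standard facts about coprime action, and my plan is to derive all of them from a single engine: the Schur--Zassenhaus theorem applied inside the semidirect product $\Gamma=GA$, where $G$ is normal and, because $(|G|,|A|)=1$, is a normal Hall subgroup of $\Gamma$ with complement $A$. I would establish (ii) first, since (i) and (iii) follow quickly from it, dispatch (iv) by commutator calculus, and reserve the existence and conjugacy of $A$-invariant Sylow subgroups as the substantive input needed for (iii) and (v).

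For (ii) only the inclusion $C_{G/N}(A)\leq C_G(A)N/N$ requires an argument. Given $gN\in C_{G/N}(A)$, the coset $gN$ is $A$-invariant, and a direct computation in $\Gamma$ shows $A^{g}\leq NA$; thus $A$ and $A^{g}$ are two complements to the normal Hall subgroup $N$ of $NA$. By the conjugacy part of Schur--Zassenhaus they are conjugate by some $n\in N$, so $gn^{-1}$ normalizes $A$. Since an element of $G$ normalizing the complement $A$ must centralize it (because $A\cap G=1$), we have $N_G(A)=C_G(A)$ and hence $g\in C_G(A)N$. For (i), note that $A$ acts trivially on $G/[G,A]$ by the very definition of $[G,A]$, so $G/[G,A]=C_{G/[G,A]}(A)$; applying (ii) with $N=[G,A]$ gives $G=[G,A]C_G(A)$.

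Part (iv) needs no coprimeness. Working in $\Gamma$, the hypothesis $N=C_N(A)$ says $[N,A]=1$, whence $[A,N,G]=1$; and since $N$ is normal, $[G,N]\leq N$ gives $[G,N,A]\leq[N,A]=1$. The three subgroups lemma applied to $G$, $A$, $N$ then yields $[G,A,N]=1$, i.e. $[G,A]$ centralizes $N$. The remaining parts rely on the existence and conjugacy of $A$-invariant Sylow subgroups. Existence follows by taking a Sylow $p$-subgroup $P$ of $\Gamma$ (which lies in $G$ since $[\Gamma:G]=|A|$ is coprime to $p$ whenever $p$ divides $|G|$), applying the Frattini argument $\Gamma=G\,N_\Gamma(P)$, and using Schur--Zassenhaus to produce inside $N_\Gamma(P)$ a complement to $G$; such a complement is conjugate to $A$ by an element $g\in G$, and then $P^{g^{-1}}$ is $A$-invariant. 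Conjugacy of any two $A$-invariant Sylow $p$-subgroups by an element of $C_G(A)$ follows from a parallel complement-conjugacy argument inside the normalizer of one of them.

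Granting this, (iii) is routine. The image $PN/N$ of an $A$-invariant Sylow subgroup $P$ of $G$ is an $A$-invariant Sylow subgroup of $G/N$ and $[x,\al]N=[xN,\al]$, which gives the inclusion $\supseteq$. Conversely, any $A$-invariant Sylow subgroup $\bar Q$ of $G/N$ equals $\overline{P^{c}}$ for some $c\in C_G(A)$ by conjugacy together with (ii), so each $[\bar x,\al]$ with $\bar x\in\bar Q$ lifts to $[y,\al]$ with $y$ in the $A$-invariant Sylow subgroup $P^{c}$ of $G$, giving $\subseteq$. Finally (v) is proved by enlargement: if an $A$-invariant $p$-subgroup $Q$ is not already Sylow, then $p$ divides $|N_G(Q)/Q|$, so by the existence result applied to $N_G(Q)/Q$ (which carries an induced coprime $A$-action) there is a nontrivial $A$-invariant Sylow $p$-subgroup, whose preimage is a strictly larger $A$-invariant $p$-subgroup containing $Q$; iterating reaches an $A$-invariant Sylow $p$-subgroup of $G$ above $Q$. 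I expect the existence/conjugacy theorem for $A$-invariant Sylow subgroups to be the only real obstacle; everything else is formal, and indeed in the paper these five facts are simply cited from \cite{gore}.
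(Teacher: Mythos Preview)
Your argument is correct and is precisely the standard Schur--Zassenhaus/Frattini treatment one finds in the reference the paper cites; as you yourself note at the end, the paper does not give its own proof of this lemma but simply refers to \cite{gore}. There is nothing to compare: your derivation of (i)--(v) matches the textbook development, and each step (the complement-conjugacy for (ii), the three-subgroups lemma for (iv), Frattini plus Schur--Zassenhaus for the existence/conjugacy of $A$-invariant Sylow subgroups, and the enlargement argument for (v)) is sound.
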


The purpose in this section is to establish conditions that lead to the solubility of $[G,\al]$. 
Throughout, by a simple group we mean a nonabelian simple group. If a simple group $G$ admits a coprime automorphism $\alpha$ of order $e$, then $G=L(q)$ is a group of Lie type and $\al$ is a field automorphism. Furthermore, $C_G(\al)=L(q_0)$ is a group of the same Lie type (and rank) defined over the subfield such that $q=q_0^e$ (see \cite{GLS3}). 

In what follows, $q=p^s$ is a power of a prime $p$. For any positive integer $n$, we say that a prime $r$ is a {\em primitive prime divisor} of $q^n -1$ if $r$ divides $q^n -1$ and $r$ does not divide $q^k -1$ for any positive integer $k< n$. Primitive prime divisors of $q^n +1$ are defined in a similar way.  The following result of Zsigmondy (\cite{Z}) on the existence of primitive prime divisors will be used in the sequel. 

\begin{theorem}[\cite{Z}] \label{Zsigmondy} Let $a>b>0$, $\mathrm{gcd}(a,b)=1$ and $n>1$ be positive integers. Then

\begin{itemize}
\item[(i)] $a^n-b^n$ has a prime divisor that does not divide $a^k-b^k$ for all positive integers $k<n$, unless $a=2,b=1$ and $n=6$; or $a+b$ is a power of $2$ and $n=2$.
\item[(ii)] $a^n+b^n$ has a prime divisor that does not divide $a^k+b^k$ for all positive integers $k<n$, with exception $2^3+1^3$.
\end{itemize}
\end{theorem}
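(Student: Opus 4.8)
The plan is to prove the existence of primitive prime divisors through the homogeneous cyclotomic polynomials. For $d \geq 1$ set $\Phi_d(X,Y) = \prod (X - \zeta^k Y)$, the product running over the primitive $d$-th roots of unity $\zeta^k$; this is a homogeneous integer polynomial of degree $\varphi(d)$, and one has the factorization $a^n - b^n = \prod_{d \mid n} \Phi_d(a,b)$. Since a prime $r \nmid b$ divides $a^k - b^k$ exactly when $d := \operatorname{ord}_r(ab^{-1})$ divides $k$, a primitive prime divisor of $a^n - b^n$ is precisely a prime $r$ with $\operatorname{ord}_r(ab^{-1}) = n$, and every such $r$ divides $\Phi_n(a,b)$; so it suffices to show that $\Phi_n(a,b)$ has a prime divisor of this kind. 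I would first reduce part (ii) to part (i): applying (i) to $a^{2n} - b^{2n} = (a^n - b^n)(a^n + b^n)$, a primitive prime divisor $r$ of $a^{2n} - b^{2n}$ cannot divide $a^n - b^n$ for order reasons, hence divides $a^n + b^n$, and it cannot divide any $a^k + b^k$ with $k < n$ since $a^k + b^k \mid a^{2k} - b^{2k}$ and $2k < 2n$; thus $r$ is primitive for $a^n + b^n$. The excluded case $2n = 6$, $a=2$, $b=1$ becomes exactly the $2^3 + 1^3$ exception, while $2n = 2$ is ruled out by the hypothesis $n > 1$.

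The arithmetic heart is a lemma classifying the non-primitive prime divisors of $\Phi_n(a,b)$. I would show that if a prime $r$ divides $\Phi_n(a,b)$ but is not primitive (so $d := \operatorname{ord}_r(ab^{-1}) < n$), then necessarily $n = d\,r^{t}$ for some $t \geq 1$; in particular $r \mid n$, and since every prime factor of $d$ divides $r - 1$ and is therefore smaller than $r$, the prime $r$ must be the largest prime factor of $n$. Consequently $\Phi_n(a,b)$ has at most one non-primitive prime divisor. A lifting-the-exponent computation then shows that this $r$ divides $\Phi_n(a,b)$ to the first power only, the sole exception being $n = 2$, where $\Phi_2(a,b) = a + b$ may absorb an arbitrarily high power of $2$. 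The upshot is a clean dichotomy: either $\Phi_n(a,b)$ has a primitive prime divisor, or $\Phi_n(a,b) = r^{c}$ with $r$ the largest prime factor of $n$, and moreover $c \leq 1$ as soon as $n > 2$.

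The analytic input is a lower bound that forces the degenerate alternative to fail. Writing $\Phi_n(a,b) = \prod_{\gcd(k,n)=1}|a - \zeta^k b|$ and pairing complex-conjugate factors, each factor has modulus at least $a - b \geq 1$ and most are considerably larger; a short estimate, together with a finite check of the small values of $n$, then yields $\Phi_n(a,b) > n$ for all $a > b \geq 1$ with $\gcd(a,b) = 1$ and $n > 2$, the single exception being $(a,b,n) = (2,1,6)$, where $\Phi_6(2,1) = 3$. Since the largest prime factor $r$ of $n$ satisfies $r \leq n$, the inequality $\Phi_n(a,b) > n \geq r$ is incompatible with $\Phi_n(a,b) \in \{1, r\}$, so a primitive prime divisor must exist.

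Combining the two ingredients finishes part (i): for $n > 2$ outside $(2,1,6)$ the lower bound contradicts the degenerate branch of the dichotomy, so $\Phi_n(a,b)$ has a primitive prime divisor; the case $n = 2$, where $\Phi_2(a,b) = a + b$, is treated directly, the only failure being when $a + b$ is a power of $2$ (then $a$ and $b$ are both odd, so $2 \mid a - b$ and the prime $2$ is not primitive); and $(2,1,6)$ gives $\Phi_6(2,1) = 3 \mid 2^2 - 1$, a genuine exception. I expect the main obstacle to be the arithmetic lemma of the second paragraph — precisely pinning down that the unique non-primitive prime occurs to the first power and isolating the anomalous behaviour at $r = 2$ and $n = 2$ — since it is exactly there that the two listed exceptional families are born, and the analytic bound must then be calibrated so as to exclude everything else.
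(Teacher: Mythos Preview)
The paper does not supply its own proof of this statement: Theorem~\ref{Zsigmondy} is quoted as a classical result of Zsigmondy with a citation to \cite{Z}, and the text immediately resumes with the proof of Theorem~\ref{soluble}. There is therefore nothing in the paper to compare your argument against.

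That said, your outline is the standard modern route to Zsigmondy's theorem and is essentially sound. The reduction of (ii) to (i) via $a^{2n}-b^{2n}=(a^n-b^n)(a^n+b^n)$ is clean and correctly isolates the exception $2^3+1^3$. The arithmetic lemma --- that any non-primitive prime $r$ dividing $\Phi_n(a,b)$ must be the largest prime factor of $n$ and, for $n>2$, occurs only to the first power --- is the key step, and your identification of the $n=2$ anomaly (where $\Phi_2(a,b)=a+b$ can be a pure power of $2$) is exactly where that exceptional family arises. The size estimate $\Phi_n(a,b)>n$ for $n>2$ apart from $(a,b,n)=(2,1,6)$ then finishes the job. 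One small point worth tightening when you write this up in full: the naive bound $|a-\zeta^k b|\ge a-b$ gives only $\Phi_n(a,b)\ge (a-b)^{\varphi(n)}$, which is vacuous when $a-b=1$; you will need the sharper pairing estimate $|a-\zeta b|\,|a-\bar\zeta b|=a^2-2ab\cos\theta+b^2\ge a^2-ab+b^2$ (or an inclusion--exclusion argument with $a^n-b^n$ and its proper divisors) to push the inequality through in that case and to handle the finite check cleanly.
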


Throughout, the term “semisimple group” means direct product of simple groups.
We are now ready to deal with  the proof of Theorem \ref{soluble}.  

\begin{proof}[Proof of Theorem \ref{soluble}]
If $[G, \al]$ is soluble, the result is clear.  Let us prove  the converse. We wish to show that there are $A$-invariant subgroups $P$ and $Q$ of coprime prime power orders such that $[x,\al]$ and $[y,\al]$ generate an insoluble subgroup for some $x \in P$ and $y \in Q$. Note that any $A$-invariant $p$-subgroup $P$ of $G$ is contained
in an $A$-invariant Sylow $p$-subgroup. 

Suppose that this is false and let $G=[G,A]$ be a counterexample of minimal order.  So any pair of elements of coprime order in $J_{G,A}(\al)$ generate a soluble group
but $[G,\al]$ is insoluble. Suppose that $G$ contains a nontrivial characteristic soluble subgroup $M$. Then, by minimality, the result holds for $A$ acting on $G/M$ and so $[G,\al]M/M$ and $[G,\al]$ are both soluble, a contradiction.

So we may assume that $G$ contains no soluble nontrivial normal subgroups.    Then $F^*(G)$ is a direct product of nonabelian simple groups
and $A$ acts faithfully on $F^*(G)$  (since $F^*(GA)=F^*(G)$).    Choose a component $S$ of $F^*(G)$ so that $\al$ does not centralize $S$.
Let $T=S_1 \times \ldots \times S_t$ be the $A$-orbit of $S=S_1$.  Then $[T,\al]$ is insoluble and so it suffices to assume that $G=T$.

Let $A_1=N_A(S)$.   Then $A_1/C_A(S)$ is a coprime
group of automorphisms of the simple group $S$.  If $A_1$ is trivial, then $|A|=t$ and $A$ acts on $G$ by permuting the coordinates
of $S_1 \times \ldots \times  S_t$.   By \cite[Theorem 1.2]{GT} (or \cite{DGHP}), there exist distinct primes $p$ and $q$, a Sylow $p$-subgroup $P_1$
of $S=S_1$ and a Sylow $q$-subgroup $Q_1$ of $S$ so that $\langle x_1, y_1 \rangle$ is insoluble, with $x_1 \in P_1$ and $y_1 \in Q_1$. 
Then the (direct) product $P$ of the distinct $A$-conjugates of $P_1$ is  an $A$-invariant Sylow $p$-subgroup of $G$.  Similarly the direct product $Q$ of the $A$-conjugates of $Q_1$ is 
an $A$-invariant Sylow $q$-subgroup of $G$.    Note that $[x_1, \al] \in [P,\al]$ and $[y_1,\al] \in [Q,\al] $ generate an insoluble group.

So we may assume that $A_1 \ne 1$.  It follows that $S=L(q)$ is a group of Lie type over the field of $q=p^s$ and $A_1$ induces a cyclic group of field automorphisms (see \cite{GLS3}).

First assume that $S$ has rank at least $2$.  Note that $A_1$ normalizes a Borel subgroup $B$ and, by the structure of field automorphisms, $A_1$ normalizes each parabolic subgroup containing $B$ and indeed $A_1$ acts faithfully on a Levi subgroup $L_1$ of $S$.   Since $q \ge 8$, $L_1$ is insoluble.
It follows that $A$ acts faithfully on the (direct) product $H$ of the $A$-conjugates of $L_1$ and $[H,\al]$ is a nontrivial normal subgroup of $H$. Thus $[H,\al]$ is insoluble and the result follows by induction.  

So assume that $S$ has (twisted) Lie rank $1$. If $S=\mathrm{PSU}_3(q)$ or $^{2}\mathrm{G}_2(q)$, we observe that there is an $A_1$-invariant subgroup isomorphic to 
$\PSL_2(q)$ which is not centralized by $A_1$ and so we can reduce to the case $S=\PSL_2(q)$. Thus, we only need to consider $S=\PSL_2(q)$ with $q=p^s$ for $s$ odd and $s\ge 5$ or $S = \Sz(q)$ with $q = 2^s$ for odd $s >1$.

If $S=\PSL_2(q)$,   take $U$ to be an $A_1$-invariant  Sylow $p$-subgroup  of $S$.  Let now $r$ be  a primitive  prime divisor of $q+ 1$, i.e.  $r$ does not divide $p^i + 1$ for $i < s$  (that always exists by Theorem \ref{Zsigmondy}),  and let  $R$ be an $A_1$-invariant Sylow $r$-subgroup of $S$.   Then $U$ and $R$ are contained in $A$-invariant Sylow subgroups (by taking the product of the distinct $t$ conjugates under $A$). 

Note that any nontrivial element $x$ of $R$ and any nontrivial element $y$ of $U$ generate $S$  by \cite[Theorem 6.25 in Chap.\ 3]{Sz2} (and in particular generate an insoluble subgroup). Moreover $[U,\al]$ and $[R,\al]$ are both contained in $J_{G,A}(\al)$. If $\al$ is contained in $A_1$, then it induces a nontrivial automorphism on $S$ and $[U,\al]\neq 1$.  Further, since $r$ does not divide the order of $C_S(\al)$ we have $[R, \al]=R$. If $x\in [U,\al]$ and $y\in [R,\al]$, then the pair $x,y$ is as required.
If $\al \notin A_1$, then the pair $[x,\al]$ and $[y,\al]$ is as required, since $\al$ conjugates $S$ to some other component $S_i$.

It remains to consider the case $S=\Sz(q)$, where $q=2^s$ for odd $s>1$.   See \cite{Sz1} for properties of Suzuki groups.   Essentially the same argument applies. 
The order of $S$ is $q^2(q-1)(q^2+1)$.  Observe that the maximal subgroups  of $S$ are (up to conjugacy) a Borel subgroup  of order $q^2(q-1)$, a dihedral subgroup of order $2(q-1)$, subfield subgroups, and two subgroups of the form $T.4$, where $T$ is cyclic of order $q  \pm  l + 1$  with $l^2 = 2q $, i.e. of order  $2^s \pm 2^{(s+1)/2} + 1$. Note 
that $(q+l+1)(q-l+1)=q^2+1$.

Let $r$ be a primitive prime divisor of $q^2+1=2^{2s} + 1$. Let $R$ be an $A_1$-invariant Sylow $r$-subgroup. Let $u$ be a primitive prime divisor of $q-1=2^s-1$. Then $u$ does not divide the order of any subfield subgroup and also $u$ does not divide $q^2+1$. By \cite[Theorem 9]{Sz1}, there is no proper subgroup of $S$ whose order is divisible by $ru$. Let $U$  be an $A_1$-invariant Sylow $u$-subgroup of $S$.  Neither of $R$ and $U$ intersects $C_S(\al)$, whence $[R,\al]=R$ and $[U,\al]=U$. Moreover  any element in $R$ or $U$ is a commutator with $\al$.   It follows that $S$ is generated by nontrivial $[x,\al]$ and $[y,\al]$ with $x \in R$ and $y \in U$.   This gives the result if $\al \in A_1$. If $\al$ does not normalize $S$, then, as in the previous case, take  elements $x\in R$ and $y\in U$. The commutators $[x,\al]$ and $[y,\al]$ generate an insoluble group. 
This completes the proof. 
\end{proof}

As a by-product of the proof of Theorem \ref{soluble} we obtain the following proposition.

\begin{proposition}\label{pi with solubility}
Let $G$  be a finite group admitting a coprime group of automorphisms $A$.  Let $\al \in A$, and assume that $\pi(xy)=\pi(x)\cup\pi(y)$ whenever $x$ and $y$ are elements of coprime orders from $J_{G,A}(\al)$.  Then $[G,\al]$ is soluble.
\end{proposition}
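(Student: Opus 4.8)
The plan is to rerun the proof of Theorem~\ref{soluble}, but carrying along a formally stronger conclusion that behaves well under passing to and from quotients. Concretely, I would prove by induction on $|G|$ that \emph{if $[G,\al]$ is insoluble, then there exist $x,y\in J_{G,A}(\al)$ of coprime orders with $\pi(xy)\not\subseteq\pi(x)\cup\pi(y)$}, i.e.\ the order of $xy$ has a prime divisor belonging to neither $\pi(x)$ nor $\pi(y)$. This at once implies the Proposition. The reason for working with this sharper statement is that, unlike the bare inequality $\pi(xy)\neq\pi(x)\cup\pi(y)$, it is inherited by preimages: if $N\trianglelefteq G$ is $A$-invariant and $\bar x,\bar y\in J_{G/N,A}(\al)$ witness the conclusion for $G/N$, then by Lemma~\ref{20}(iii) they lift to elements $x,y\in J_{G,A}(\al)$ of the same prime-power orders, and since $\pi(\overline{xy})\subseteq\pi(xy)$ while $\pi(\bar x)\cup\pi(\bar y)=\pi(x)\cup\pi(y)$, the pair $x,y$ works for $G$.

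Granting this, the reduction in the proof of Theorem~\ref{soluble} carries over word for word. After reducing to $G=[G,A]$, a minimal counterexample $G$ has no nontrivial soluble characteristic subgroup (if $M$ were one, then $[G/M,\al]$ is still insoluble, so the sharp conclusion holds for $G/M$ by induction and lifts back to $G$), hence $F^*(G)$ is a direct product of nonabelian simple groups on which $A$ acts faithfully. Choosing a component $S=S_1$ not centralized by $\al$ and letting $T=S_1\times\cdots\times S_t$ be its $A$-orbit, we have $J_{T,A}(\al)\subseteq J_{G,A}(\al)$ by Lemma~\ref{20}(v), and $[T,\al]$ is insoluble; if $T\subsetneq G$ we are done by induction applied to $T$, so we may assume $G=T$.

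It then remains to produce the required pair inside $J_{T,A}(\al)$, following the case analysis of Theorem~\ref{soluble}. If $A_1=N_A(S)=1$, then $\al$ permutes the factors without fixing $S_1$; for $x=(x_1,1,\dots,1)$, $y=(y_1,1,\dots,1)$ with $x_1,y_1\in S$ of distinct prime orders $p,q$ one checks that $[x,\al],[y,\al]\in J_{T,A}(\al)$ have coprime orders $p,q$ and that $\pi\bigl([x,\al][y,\al]\bigr)=\pi(x_1y_1)$, so it suffices to pick $x_1,y_1$ with $\pi(x_1y_1)\not\subseteq\{p,q\}$. If $A_1\neq1$, then $S$ is of Lie type with $A_1$ inducing field automorphisms; the rank~$\ge2$ case and the reductions of $\mathrm{PSU}_3(q)$ and ${}^{2}\mathrm{G}_2(q)$ to $\PSL_2(q)$ reduce, exactly as in Theorem~\ref{soluble}, to an $A$-invariant proper subgroup $H$ with $[H,\al]$ insoluble, so we finish by induction on $H$; and the sub-case $\al\notin A_1$ is handled by the same product computation as when $A_1=1$. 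This leaves $S=\PSL_2(q)$ or $\Sz(q)$ with $\al$ inducing a nontrivial field automorphism; here one uses the two $A_1$-invariant Sylow subgroups of coprime orders figuring in the proof of Theorem~\ref{soluble} (a Sylow $p$-subgroup and a Sylow subgroup for a primitive prime divisor $r$ of $q+1$ when $S=\PSL_2(q)$; Sylow subgroups for primitive prime divisors of $q^2+1$ and of $q-1$ when $S=\Sz(q)$), whose relevant parts lie in $J_{T,A}(\al)$ and whose nontrivial elements $x,y$ generate $S$. Since the explicit subgroup structure of these groups, with Theorem~\ref{Zsigmondy}, already forbids an element of order divisible by both primes involved, such a pair automatically satisfies $\pi(xy)\neq\pi(x)\cup\pi(y)$, and a short extra argument — varying $x,y$ over the available choices and noting that only a few of the products can have $p$- or $r$-power order — upgrades this to $\pi(xy)\not\subseteq\pi(x)\cup\pi(y)$.

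The crux, exactly as for Theorem~\ref{soluble}, is this simple-group input: for an arbitrary nonabelian simple $S$ one must exhibit elements of distinct prime orders whose product has order divisible by a third prime (e.g.\ by invoking the generation results of \cite{GT} and \cite{DGHP} together with the abundance of element orders in $S$), and for $\PSL_2(q)$ and $\Sz(q)$ one must make the analogous ``fresh prime'' choice inside the constrained sets of commutators with $\al$. Everything else is routine manipulation of coprime actions and the elementary commutator identities used above.
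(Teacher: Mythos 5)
Your overall plan---rerunning the proof of Theorem~\ref{soluble}---is exactly the modification the paper intends, and your observation that the bare inequality $\pi(xy)\neq\pi(x)\cup\pi(y)$ does not lift from a quotient, whereas the stronger assertion $\pi(xy)\not\subseteq\pi(x)\cup\pi(y)$ does, is a sensible point. But strengthening the inductive statement in this way pushes all of the work into the terminal simple-group cases, and precisely there your argument has genuine gaps. When $\al$ normalizes $S=\PSL_2(q)$ or $\Sz(q)$ and induces a field automorphism, the paper's choice of subgroups only guarantees that no element of $S$ has order divisible by both of the two selected primes; that yields the weak failure $\pi(xy)\neq\pi(x)\cup\pi(y)$ at once, but it does not produce your required third prime: the product $xy$ with $x\in[U,\al]$ and $y\in R$ could perfectly well be a $p$-element or an $r$-element, and ruling this out for some admissible pair is a real problem. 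For instance, in $\PSL_2(q)$, for fixed $y$ the traces of the products $xy$, $x\in[U,\al]$, fill only an additive coset of size $q/q_0$, while the traces realized by $r$-elements can be of size comparable to $q$ when $(q+1)/\gcd(2,q-1)$ is a power of $r$, so naive counting does not settle it even after varying $y$. Your ``short extra argument'' is asserted, not given, and it is not obviously short.

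The same difficulty occurs in the case where $\al$ moves the component (including $A_1=1$): your computation $\pi([x,\al][y,\al])=\pi(x_1y_1)$ is correct, but you then need, in an arbitrary nonabelian simple group, a $p$-element and a $q$-element whose product has order divisible by a third prime. The results you invoke, \cite{GT} and \cite{DGHP}, concern solubility of $\langle x_1,y_1\rangle$ and say nothing about $\pi(x_1y_1)$; insolubility of $\langle x_1,y_1\rangle$ is entirely compatible with $x_1y_1$ being a $p$-element. What you need is a nontrivial CFSG-type statement in the spirit of \cite{GM} or \cite{MS}, and it must be proved or cited precisely (with the extra care, when $A_1\neq1$, that the elements sit in $A_1$-invariant Sylow subgroups). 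By contrast, the weak form required by the paper's intended modification is cheap at this point: one may choose two primes nonadjacent in the prime graph of $S$ (for Lie type, the defining characteristic together with a Zsigmondy prime as in Theorem~\ref{Zsigmondy}), so that no element of $S$ has order divisible by both and $\pi(x_1y_1)\neq\{p,q\}$ holds automatically; the cost is then checking that the hypothesis $\pi(xy)=\pi(x)\cup\pi(y)$ passes to the quotient in the first reduction---the very step your strengthening was designed to bypass. As written, your proof is incomplete at both terminal cases, and those are exactly the non-routine parts of the argument.
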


The proof will be omitted since it is just an obvious modification of the argument used in Theorem \ref{soluble}. The proposition will be used in the proof of Theorem \ref{nilpotency-2gen}, which in particular shows that under the hypotheses of the proposition the subgroup $[G,\al]$ is nilpotent.

\section{Criteria for nilpotency of $[G,\al]$}

In this section we aim to establish Theorem \ref{nilpotency-2gen} that gives necessary and sufficient conditions for the nilpotency of $[G,\al]$. For the reader’s convenience we restate it here.
\bigskip

{\it Let $G$  be a finite group admitting a coprime group of automorphisms $A$, and let $\al\in A$. Then the following statements are equivalent.
\begin{itemize}
\item[(i)] The subgroup $[G,\al]$ is nilpotent;
\item[(ii)] Any subgroup generated by a pair of  elements of coprime orders from $J_{G,A}(\al)$ is nilpotent;
\item[(iii)]  Any subgroup generated by a pair of elements of coprime orders from $J_{G,A}(\al)$ is abelian;
\item[(iv)]  If $x$ and $y$ are elements of coprime orders from $J_{G,A}(\al)$, then $|xy|=|x||y|$;
\item[(v)] If $x$ and $y$ are elements of coprime orders from $J_{G,A}(\al)$, then $\pi(xy)=\pi(x)\cup\pi(y)$.
\end{itemize}
}

\begin{proof}[Proof of Theorem \ref{nilpotency-2gen}] Note that obviously  (i) implies (ii). Now if $x$ and $y$  are elements of coprime prime power  orders  from $J_{G,A}(\al)$ generating a nilpotent subgroup, then $x$ and $y$ commute with each other and so (ii) implies (iii). If $x$ and $y$  are elements of coprime prime power orders  from $J_{G,A}(\al)$ generating an abelian subgroup, then $|xy|=|x||y|$, and so (iii) implies (iv). Moreover it is immediate to see that (iv) implies (v). 

We now prove the implication (v) $\Rightarrow$  (i). Assume (v). In view of Proposition \ref{pi with solubility}, the subgroup $[G,\al]$ is soluble. Suppose that the result is false and let $G$ be a counterexample of minimal order. Note that $G=[G,\al]^A$, that is, there are no proper $A$-invariant subgroups containing $[G,\al]$. Let $M$ be a minimal $A$-invariant normal subgroup of $G$. Hence, $M$ is an elementary abelian $p$-group,  for some prime $p$, because $G$ is soluble. Since $G/M$ satisfies the hypothesis, by  induction $[G,\al]M/M$ is nilpotent. Taking into account that $G=[G,\al]^A$ we deduce that $G/M$ is nilpotent.

It follows that $G=MQ$, where $Q$ is an $A$-invariant Sylow $q$-subgroup of $G$ satisfying $Q=[Q,\al]^A$. Remark that by definitions of $J_{G,A}(\al)$ and $J_{G}(\al)$ we have $J_{M,A}(\al)=J_{M}(\al)$ and $J_{Q,A}(\al)=J_Q(\al)$.

By hypothesis, for any $x\in J_Q(\al)$ and $y\in J_M(\al)$, the order  of $xy$ is divisible by both $p$ and $q$. Note that the element $y$ is of the form $[m,\al]$, for suitable element $m\in M$  and so $y\in [M,\al]$. If $y\in [M,x]$, then $xy$ is a $q$-element, contradicting the hypothesis. Hence we have $[M,x]\cap[M,\al]=1$, whenever  $x\in J_Q(\al)$.  

Suppose that for a nontrivial element $u$ there is $x\in J_Q(\al)$ such that $u\in C_{M}(x\al^{-1})$ while $u\not\in C_{M}(\al)$. Then the element $[u,\al]$ is nontrivial and belongs to the intersection $[M,\al]\cap[M,x]$. This leads to a contradiction with the above assumption that $[M,x]\cap[M,\al]=1$, whenever $x\in J_Q(\al)$. 

Therefore 
\begin{equation}\label{eq1}C_M(x\al^{-1})\leq C_M(\al)\,\, \text{whenever}\,\, x\in J_Q(\al). \end{equation} Now, let $g$ be an arbitrary element of $Q$ and apply (\ref{eq1})  with $x=[g,\al]$. Then $x\al^{-1}=\al^{-g}$ and we obtain that $C_M(\al)^g=C_M(\al)$. Since this holds for any $g\in Q$, we conclude that $C_M(\al)$ is normal in $G$. Lemma \ref{20} (iv) now shows that $C_M(\al)$ commutes with $[Q,\al]$. Since $G$ is a counterexample of minimal order, we deduce that $C_M(\al)=1$. Thus, we have  $M=[M,\al]$. Recall that $[M,x]\cap[M,\al]=1$. It follows that $[M,x]=1$ for any $x\in J_Q(\al)$ and, taking into account that $Q=[Q,\al]^A$, we obtain that $M\leq Z(G)$, a contradiction.  This proves that (v) implies (i) and thus the theorem is established.
\end{proof}



\begin{thebibliography}{40}
\bibitem{AGS}C. Acciarri, R.\,M.\,Guralnick and P. Shumyatsky, {\it Coprime automorphisms of finite groups}, Trans. Amer. Math. Soc., 375:7 (2022), 4549--4565. https://doi.org/10.1090/tran/8553
\bibitem{BS} R. Bastos and P. Shumyatsky, {\it A sufficient condition for nilpotency of the commutator subgroup}, Sib. Math. J. 57 (2016), 762--763.
\bibitem{BMS} R. Bastos, C. Monetta and P. Shumyatsky, {\it A criterion for metanilpotency of a finite group}, J. Group Theory 21  (2018), 713--718.
\bibitem{BW}B. Baumslag and J. Wiegold, {\it A sufficient condition for nilpotency in a finite group}, arXiv:1411.2877v1 (2014).
\bibitem{Carter} R.\,W.\, Carter, {\it Simple Groups of Lie Type}, John Wiley \& Sons, 1972.
\bibitem{daSilvaS} J. Alves Da Silva  and P. Shumyatsky, {\it On Nilpotency of Higher Commutator Subgroups of a Finite Soluble Group}, Arch. Math., 116:1 (2021), 1--6.
\bibitem{DGHP} S. Dolfi, R.M. Guralnick, M. Herzog and C.Praeger, {\it A new solvability criterion for finite groups},   J. Lond. Math. Soc. (2) 85 (2012),  269--281. 
\bibitem{gore} D. Gorenstein, {\it Finite Groups}, Chelsea Publishing Company, New York, 1980.
\bibitem{GLS3}D. Gorenstein, R. Lyons and R. Solomon, {\it The classification of the finite simple groups}. Number 3. Part I. Chapter A: Almost Simple K-groups, Mathematical Surveys and Monographs, vol.~40, AMS, Providence, RI, 1998.
\bibitem{GM} R.\,M. Guralnick and A.  Moret\'o, {\it Conjugacy classes, characters and products of
elements}, Math. Nachr. 292 (2019), 1315--1320.
\bibitem{GT}  R.\,M. Guralnick and P.\,H. Tiep,  {\it Lifting in Frattini covers and a  characterization of finite solvable groups},  J. Reine Angew. Math. 708 (2015), 49--72. 
\bibitem{MS} A. Moret\'o and A. S\'aez, {\it Prime divisors of orders of products}, Proc. Roy. Soc. Edinb.
Sect. A 149 (2019), 1153--1162.
\bibitem{Shumy} P. Shumyatsky, {\it Linear groups with almost right Engel elements},  Proc. Edinb. Math. Soc. (2) 62 (2019), no. 3, 789--797.
\bibitem{Sz1} M. Suzuki, {\it On a Class of Doubly Transitive Groups}, Annals Math. 75 (1962),  105--145.
 \bibitem{Sz2} M. Suzuki,  {\it Group Theory I},  Grundlehren der mathematischen Wissenschaften 247,  Springer-Verlag, Berlin, 1982.
\bibitem{Th} J.\,G.\, Thompson, {\it Nonsolvable finite groups all of whose local subgroups are solvable}, Bull. Amer. Math. Soc., 74 (1968), 383--437.
\bibitem{Wilson}R.\,A. Wilson, {\it The Finite Simple Groups}, Graduate Texts in Mathematics, vol.~251. Springer-Verlag London, Ltd., London, 2009.
\bibitem{Z} K. Zsigmondy, {\it  Zur Theorie der Potenzreste}, Monatsh. f\"ur Math. u. Phys. 3 (1892), 265--284.
\end{thebibliography}
\end{document}